\numberwithin{equation}{section}
\newtheorem{theorem}{Theorem}[section]
\newtheorem{lemma}[theorem]{Lemma}
\newtheorem{definition}[theorem]{Definition}
\theoremstyle{remark}
\DeclareMathOperator{\diam}{diam}
\DeclareMathOperator{\dist}{dist}
\newcommand{\N}{\mathbb{N}}
\newcommand{\R}{\mathbb{R}}
\def\dist{{\mathop\mathrm{\,dist\,}}}
\def\bint{{\ifinner\rlap{\bf\kern.35em--}
\int\else\rlap{\bf\kern.45em--}\int\fi}\ignorespaces}
\def\bbint{{\ifinner\rlap{\bf\kern.35em--}
\hspace{0.078cm}\int\else\rlap{\bf\kern.45em--}\int\fi}\ignorespaces}
\def\diam{{\mathop\mathrm{\,diam\,}}}
\def\card{{\mathop\mathrm{\,card\,}}}
\def\bint{{\ifinner\rlap{\bf\kern.35em--}
\int\else\rlap{\bf\kern.45em--}\int\fi}\ignorespaces}
\begin{document}

\title[]
{Dimension estimate for the two-sided points of\\
planar Sobolev extension domains}
\author{Jyrki Takanen}

\address{University of Jyvaskyla \\
         Department of Mathematics and Statistics \\
         P.O. Box 35 (MaD) \\
         FI-40014 University of Jyvaskyla \\
         Finland}
\email{jyrki.j.takanen@jyu.fi}

\thanks{The author acknowledges the support from the Academy of Finland, grant no. 314789.}
\subjclass[2000]{Primary 46E35, 28A75.}
\keywords{}
\date{\today}



\begin{abstract}
In this paper we give an estimate for the Hausdorff dimension of the set of two-sided points of the boundary of bounded simply connected Sobolev $W^{1,p}$-extension domain for $1<p<2$. Sharpness of the estimate is shown by examples. We also prove the equivalence of different definitions of two-sided points.
\end{abstract}

\maketitle

\section{Introduction}
This paper is part of the study of the geometry of the boundary of Sobolev extension domains in Euclidean spaces.
Recall that a domain $\Omega$ is a $W^{1,p}$-extension domain if there exists a bounded operator
$E\colon W^{1,p}(\Omega) \to W^{1,p}(\R^n)$
with the property that $Eu|_\Omega = u$ for each $u \in W^{1,p}(\Omega)$. 
Here, for $1\leq p\leq \infty$, we denote by $W^{1,p}(\Omega)$ the set of all functions in $L^p(\Omega)$ whose first distributional derivatives are in $L^p(\Omega)$. The space $W^{1,p}(\Omega)$ is normed by 
$$\Vert u \Vert_{W^{1,p}(\Omega)} := \Vert u \Vert_{L^p(\Omega)}+\Vert\nabla u \Vert_{L^p(\Omega)}.$$

Additionally, in the case $p>1$ the operator may be chosen to be linear (see \cite{HKT08}).
When $p=1$ linearity of the operator is know only for bounded simply connected domains (\cite{KoskelaW11domains}).

Several classes of domains are known to be $W^{1,p}$-extension domains. For instance, Lipschitz domains (Calderón \cite{Calderon61} $1<p<\infty$, and Stein \cite{Stein70} $p=1,\infty$). Jones \cite{Jones1981} introduced a wider class of $(\epsilon,\delta)$-domains and proved that every $(\epsilon,\delta)$-domain is a  $W^{1,p}$-extension domain. 
Notice that the Hausdorff dimension of the boundary of a Lipschitz domain is $n-1$ and the boundary is rectifiable. For an  $(\epsilon,\delta)$-domain the Hausdorff dimension of the boundary may be strictly greater than $n-1$ and it may not be locally rectifiable (for example the Koch snowflake). However, an easy argument shows that the boundary of an $(\epsilon, \delta)$-domain can not self-intersect.

In the case where $\Omega \subset \mathbb R^2$ is bounded and simply connected, the $W^{1,p}$-extendability has been characterized. From the results in \cite{GLV1979}, \cite{GMR1990},  \cite{GV1981}, \cite{Jones1981}, we know that a bounded simply connected domain $\Omega\subset \mathbb{R}^2$ is a $W^{1,2}$-extension domain if and only if $\Omega$ is a quasidisk, or equivalently an uniform domain.

In \cite{Shvartsman2010} Shvartsman proved the following characterization for $W^{1,p}$-extension domains. For $2<p<\infty$ and $\Omega$ a finitely connected bounded planar domain, then $\Omega$ is a Sobolev $W^{1,p}$-extension domain if and only if for some $C>1$ the following condition is satisfied: for every $x,y \in \Omega$ there exists a rectifiable curve $\gamma \subset \Omega$ joining $x$ to $y$ such that
$$\int_\gamma \dist (z,\partial \Omega)^{\frac{1}{1-p}} \, {\rm d}s(z) \leq C \Vert x -y \Vert ^{\frac{p-2}{p-1}}.$$
In particular, when $2 \le p < \infty$, a finitely connected bounded $W^{1,p}$-extension domain $\Omega$ is quasiconvex, meaning that there exists a constant $C\geq 1$ such that any pair of points in $z_1,z_2 \in \Omega$ can be connected with a rectifiable curve $\gamma \subset \Omega$ whose length satisfies $\ell(\gamma ) \leq C |z_1-z_2|$.

In paper \cite{KoskelaW1pChar} the case $1 < p < 2$ was characterized: a bounded simply connected $\Omega \subset \mathbb{R}^2$ is a Sobolev $W^{1,p}$-extension domain if and only if there exists a constant $C > 1$ such that for every $z_1,z_2 \in \R^2 \setminus {\Omega}$ there exists a curve  $\gamma \subset \R^2 \setminus {\Omega}$ connecting $z_1$ and $z_2$ and satisfying
 \begin{equation}\label{eq:curve_condition_intro}
  \int_{\gamma} \dist(z,\partial \Omega)^{1-p}\,{\rm d}s(z) \leq C\|z_1-z_2\|^{2-p}.
 \end{equation}

The above geometric characterizations give bounds for the size of the boundary of Sobolev extension domains.
The following estimate for the Hausdorff dimension of the boundary for simply connected $W^{1,p}$-extension domain $\Omega$ in the case $p\in(1,2)$ was given in \cite{LRT2020} :
$$\dim_{\mathcal H}(\partial \Omega) \leq 2- \frac{M}{C},$$ where $C$ is the constant in \eqref{eq:curve_condition_intro} and $M>0$ is an universal constant.
Recall that for $s > 0$, the \emph{$s$-dimensional Hausdorff measure} of a subset $A \subset \mathbb{R}^n$ is defined by
$$\mathcal H^s(A) = \lim_{\delta \downarrow 0} \mathcal H_\delta^s (A),$$
where 
$\mathcal H _\delta^s (A) = \inf \left\{ \sum_i \diam(E_i)^s : A \subset \bigcup_i E_i, \diam (E_i) \leq \delta \right\}.$
The \emph{Hausdorff dimension of a set $A \subset \mathbb{R}^n$} is then given by
$$\dim_{\mathcal H} (A) = \inf \{t : \mathcal H^t(A)< \infty\}.$$

In this paper, we are interested in the case $1<p<2$, when the boundary of $\Omega$ may self-intersect, (for examples see \cite[Example 2.5]{Koskela1990}, \cite{Deheuvels2016}, and Section \ref{sec:sharpness}). More accurately, we study the size of the set of two-sided points. In the case of $2 \leq p \leq \infty$, there are no such points which can be seen from the quasiconvexity.  The case $p=1$ has been characterized in \cite{KoskelaW11domains} as a variant of quasiconvexity of the complement. 
In this case the dimension of the set of two-sided points does not depend on the constant in quasiconvexity. 
Let us define what we mean by a two-sided point. Here we give a definition which generalizes to $\mathbb{R}^n$, but the proof of our main theorem will use an equivalent formulation based on conformal maps, see Section \ref{sec:equivalence}.
 \begin{definition}[Two-sided points of the boundary of a domain]\label{def:two-sidedpoint}
 Let $\Omega \subset \mathbb{R}^n$ be a domain. A point $x \in \partial\Omega$ is called \emph{two-sided}, if there exists $R>0$ such that for all $r \in (0,R)$ there exist connected components $\Omega_r^1$ and $\Omega_r^2$ of $\Omega \cap B(x,r)$ that are nested: $\Omega_r^i \subset \Omega_s^i$ for $0 < r < s < R$ and $i \in \{1,2\}$.
\end{definition}
We denote by $\mathcal T$ the two-sided points of  $\Omega$. Note, that the nestedness condition in Definition \ref{def:two-sidedpoint} for the connected components $\Omega_r^i$ implies that $x \in \partial \Omega_r^i$.
We establish the following dimension estimate for $\mathcal T$ for simply connected planar $W^{1,p}$-extension domains.
\begin{theorem}\label{thm:main}
Let $1 < p < 2$ and $\Omega \subset \mathbb{R}^2$ a simply connected, bounded Sobolev $W^{1,p}$-extension domain. Let $\mathcal T$ be the set of two-sided points of $\Omega$. Then 

\begin{equation}\label{eq:mainclaim}
\dim_{\mathcal H}(\mathcal T) \leq 2- p + \log _{2} \left(1- \frac{2^{p-1}-1}{2^{5-2p}C}\right) , 
\end{equation} 

where $C$ is the constant in \eqref{eq:curve_condition_intro}. 
\end{theorem}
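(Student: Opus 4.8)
The plan is to prove the estimate by a multiscale covering argument, controlling how efficiently a cover of $\mathcal T$ by dyadic balls must refine as the scale is halved. First I would pass to the conformal reformulation of two-sidedness established in Section \ref{sec:equivalence}: writing $\Omega$ as the conformal image of the disk, a two-sided point is detected by two distinct channels (prime ends) accessing it, so at every small dyadic scale $r=2^{-k}$ the two nested components $\Omega_r^1,\Omega_r^2$ of Definition \ref{def:two-sidedpoint} are genuinely separated inside $B(x,r)$ by a crosscut of the complement $\R^2\setminus\Omega$. Let $N(r)$ denote the minimal number of balls of radius $r$ needed to cover $\mathcal T$. The goal is a one-step refinement inequality $N(r/2)\le\lambda\,N(r)$ with $\lambda=2^{2-p}\bigl(1-\tfrac{2^{p-1}-1}{2^{5-2p}C}\bigr)$; iterating from a fixed initial scale gives $N(2^{-k})\lesssim\lambda^k$, hence $\dim_{\mathcal H}(\mathcal T)\le\overline{\dim}_B(\mathcal T)\le\log_2\lambda$, which is exactly \eqref{eq:mainclaim}.

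The heart of the argument is a quantitative gap lemma extracting a definite deficit in the branching from the curve condition \eqref{eq:curve_condition_intro}. Fix a ball $B(x,r)$ meeting $\mathcal T$ and consider the child balls of radius $r/2$ it contains. For a child to house a two-sided point, the separating crosscut of $\R^2\setminus\Omega$ must reach into it; choosing complement points $z_1,z_2$ on opposite sides of the crosscut with $\|z_1-z_2\|\lesssim r$, the characterization provides a curve $\gamma\subset\R^2\setminus\Omega$ with $\int_\gamma\dist(z,\partial\Omega)^{1-p}\,\d s\le C\|z_1-z_2\|^{2-p}$. Since $\gamma$ must wind around the tip of the crosscut, where $\dist(\cdot,\partial\Omega)$ is comparable to the distance to $x$, bounding $\dist(z,\partial\Omega)^{1-p}$ below by $t^{1-p}$ at radius $t$ and integrating across the dyadic annulus produces the factor $\int_{r/2}^{r}t^{-p}\,\d t=\tfrac{2^{p-1}-1}{p-1}\,r^{1-p}$, the source of the term $2^{p-1}-1$. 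Weighing the forced cost of separating the two-sided children against the single budget $Cr^{2-p}$ allotted by \eqref{eq:curve_condition_intro} shows that a proportion at least $\tfrac{2^{p-1}-1}{2^{5-2p}C}$ of the children cannot carry a two-sided point, while the baseline branching factor $2^{2-p}$ of the covering survives on the remaining children. This is precisely the refinement factor $\lambda$.

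With the gap lemma in hand the conclusion is routine bookkeeping: summing the one-step estimate over dyadic scales yields $\overline{\dim}_B(\mathcal T)\le\log_2\lambda$, and since Hausdorff dimension is dominated by upper box dimension, \eqref{eq:mainclaim} follows.

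I expect the main obstacle to be the gap lemma with its sharp constant. The delicate points are: (i) showing that the two-sided structure persists and branches consistently across all dyadic scales, so that the conformal reformulation of Section \ref{sec:equivalence} forces a separating crosscut in \emph{each} annulus rather than only at isolated scales; (ii) controlling the geometry of the connecting curve $\gamma$, which may wind and re-enter the annulus, so that the lower bound $\dist(z,\partial\Omega)^{1-p}\ge t^{1-p}$ and the resulting integral $\int_{r/2}^{r}t^{-p}\,\d t$ can be applied without degrading the constant; and (iii) distributing the single budget $Cr^{2-p}$ among the competing two-sided children so as to obtain the exact excluded proportion $\tfrac{2^{p-1}-1}{2^{5-2p}C}$ rather than a non-sharp fraction. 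These are exactly the steps on which the sharpness asserted in the abstract must rest, and where the interplay between the integral characterization and the combinatorics of the cover is most subtle.
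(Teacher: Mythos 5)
Your overall flavour (multiscale counting, the curve condition \eqref{eq:curve_condition_intro}, and the factor $2^{p-1}-1$ arising from integrating the weight over a dyadic annulus) matches the paper, but the proposal has two genuine gaps. First, the central claimed tool --- a scale-by-scale refinement inequality $N(r/2)\le\lambda N(r)$ with $\lambda=2^{2-p}\bigl(1-\tfrac{2^{p-1}-1}{2^{5-2p}C}\bigr)$ --- is not something the curve condition can deliver, and is the wrong statement to aim for. The condition \eqref{eq:curve_condition_intro} gives a single budget $C\,r^{2-p}$ that controls the \emph{sum over all scales} of (count at scale $2^{-j}$)$\times 2^{-j}\times 2^{j(p-1)}$; it places no constraint on the ratio of counts between two consecutive scales, so the branching can be as large as the planar covering constant at any individual step. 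Moreover your ``baseline branching factor $2^{2-p}$'' is not a baseline: for $1<p<2$ it is a number in $(1,2)$, far below the trivial covering bound, and the estimate $\dim\le 2-p$ is itself a nontrivial consequence of the curve condition rather than something that ``survives'' for free on the unexcluded children. This is exactly why the paper's Lemma \ref{lma:hausdorffdim} is phrased as ``for each ball at scale $i$ there exists \emph{some} deeper scale $j>i$ with $N_j<\lambda^{-(j-i)s}$'': one can only rule out branching that is uniformly bad across \emph{all} deeper scales, by summing the geometric series against the budget, not step by step.

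Second, your cost accounting never closes because you have no mechanism forcing a single admissible curve to pay for \emph{every} two-sided child at \emph{every} scale. You construct, per parent ball, a curve $\gamma$ joining two complement points on opposite sides of ``the crosscut,'' but a curve in $\R^2\setminus\Omega$ joining two such points need not pass near the tip at all, let alone near each of the deeper two-sided points; it can detour through the bulk of the complement. The paper's key structural input, Lemma \ref{lma:countablecurves}, is a topological (Jordan curve) argument showing that every point of $E$ actually \emph{lies on} one of countably many fixed curves $\gamma_{i,j}$ satisfying \eqref{eq:curve_condition_intro}; only then does a maximal $2^{-j}$-separated net of $E\cap\gamma$ give the lower bound $\mathcal H^1(\{z\in\gamma\cap B: d(z,\partial\Omega)<2^{-j}\})\ge N_{j-1}2^{-j}$ that feeds into Cavalieri's principle. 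Without an analogue of this lemma, your ``gap lemma'' cannot convert the integral bound into an excluded proportion of children, sharp constant or not. To repair the argument you would need to (i) replace the one-step inequality by the integrated, all-scales statement, and (ii) establish that $\mathcal T$ is covered by countably many curves satisfying \eqref{eq:curve_condition_intro} and run the counting along each such curve separately.
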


Recalling that necessarily $C \ge 1$, the estimate \eqref{eq:mainclaim} implies the existence of a constant $M_1(p)>0$ depending only on $p$, such that
 \[
 \dim_{\mathcal H}(\mathcal T) \leq 2- p - \frac{M_1(p)}{C}. 
 \]
 In Section \ref{sec:sharpness} we show the existence of another constant $M_2>0$ and examples $\Omega_{p,C}$ of Sobolev $W^{1,p}$-extension domains 
 for every $1 < p < 2$ and every $C> C(p)$ for which
 \begin{equation}\label{eq:sharpnessintro}
 \dim_{\mathcal H}(\mathcal T_{\Omega_{p,C}}) \geq 2- p - \frac{M_2}{C}. 
 \end{equation}
 This shows the sharpness of Theorem \ref{thm:main}.

\section{Equivalent definitions for two-sided points}\label{sec:equivalence}

In this section we give equivalent conditions for the set of two-sided points in the case that the domain is John. We note that a bounded simply connected planar domain satisfying the condition \eqref{eq:curve_condition_intro} is John {(\cite[Chapter 6 Thm 3.5]{GMR1990})}. Recall, that $\Omega$ is a $J$-John domain, if there exists a constant $J > 0$ and a point $x_0 \in \Omega$  so that for every $x \in \Omega$ there exists a unit speed curve $\gamma\colon [0,\ell(\gamma)]\to \Omega$ such that $\gamma(0) = x$, $\gamma(\ell(\gamma)) = x_0$, and 
 \begin{equation}\label{eq:John}
 \dist(\gamma(t), \partial \Omega) \geq Jt \qquad \text{for all }t \in [0,\ell(\gamma)].
 \end{equation}
 
We denote the open unit disk of the plane by $\mathbb {D}$.
  For a bounded simply connected John domain $\Omega \subset \mathbb R^2$, a conformal map $f\colon \mathbb D \to \Omega$ can always be extended continuously to a map $f \colon \overline{\mathbb D} \to \overline{\Omega}$.
  This is because a
  John domain is finitely connected along its boundary \cite{NakkiVaisala1991} and a conformal map from the unit disk to $\Omega$ can be extended continuously onto the closure $\overline{\Omega}$ if and only if the domain is finitely connected along its boundary \cite{Palka1991}.

\begin{theorem}\label{thm:equivalence}
Let $\Omega\subset \mathbb R^2$ be a bounded simply connected John domain (especially, if $\Omega$ is a bounded simply connected $W^{1,p}$-extension domain for $1 < p < 2$). 
Let $f\colon \mathbb D \to \Omega$ be a conformal map extended continuously to a function $\overline{\mathbb D} \to \overline{\Omega}$ still denoted by $f$. Define
$$E = \{x \in \partial \Omega : f^{-1} (\{x\}) \text{ disconnects } \partial \mathbb D \}$$
and
$$\tilde E = \{ x \in \partial \Omega :  \, \card( f^{-1}(\{x\})) >1\}. $$
Then
\[
\mathcal T = E = \tilde E,
\]
where $\mathcal T$ is the set of two-sided points according to Definition \ref{def:two-sidedpoint}.
\end{theorem}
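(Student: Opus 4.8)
The plan is to prove the two equalities separately, disposing of $E=\tilde{E}$ first and reserving the bulk of the work for $\mathcal T=\tilde{E}$; throughout I use only that $f$ is a homeomorphism of $\mathbb D$ onto $\Omega$ together with its continuous extension to $\overline{\mathbb D}$ (which the John hypothesis supplies), and the crosscut topology of the disk does the rest. I would begin by recording two elementary facts about the fibers $f^{-1}(\{x\})$ for $x\in\partial\Omega$. Since $f(\mathbb D)=\Omega$ and $f$ is injective on $\mathbb D$, any preimage of a boundary point lies on $\partial\mathbb D$, so $f^{-1}(\{x\})\subseteq\partial\mathbb D$ is compact; and by the classical boundary uniqueness theorem a nonconstant bounded holomorphic map cannot take a constant value on a subarc of $\partial\mathbb D$, so $f^{-1}(\{x\})$ has empty interior. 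For a compact nowhere dense $K\subseteq\partial\mathbb D$, the set $\partial\mathbb D\setminus K$ is disconnected if and only if $K$ has at least two points: two points of $K$ cut the circle into two arcs, each meeting the complement because $K$ is nowhere dense, while a single point cannot disconnect a circle. Applying this to $K=f^{-1}(\{x\})$ gives $E=\tilde{E}$ at once.

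For $\mathcal T\subseteq\tilde{E}$ I would argue by contraposition: assume $\card f^{-1}(\{x\})=1$, say $f^{-1}(\{x\})=\{\zeta\}$, and show $x\notin\mathcal T$. A compactness argument using injectivity and continuity shows that preimages of points of $\Omega$ approaching $x$ must cluster at $\zeta$: if $y_k\to x$ in $\Omega$, every subsequential limit of $f^{-1}(y_k)$ lies in $\partial\mathbb D$ and maps to $x$, hence equals $\zeta$. Consequently, given small $r$, I can pick a disk $D(\zeta,\rho)$ with $f(D(\zeta,\rho)\cap\mathbb D)\subseteq B(x,r)$; since $D(\zeta,\rho)\cap\mathbb D$ is connected, its image is a connected subset of $\Omega\cap B(x,r)$. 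Every component of $\Omega\cap B(x,r)$ whose closure contains $x$ has preimage accumulating at $\zeta$, hence meets $f(D(\zeta,\rho)\cap\mathbb D)$, so for $r$ small there is exactly one such component. This rules out two distinct nested families and forces $x\notin\mathcal T$.

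The heart of the argument is $\tilde{E}\subseteq\mathcal T$. Given distinct $\zeta_1,\zeta_2\in f^{-1}(\{x\})$, they split $\partial\mathbb D$ into two open arcs $I,I'$, and since $f^{-1}(\{x\})$ is nowhere dense I can choose $a\in I$, $b\in I'$ with $f(a)\ne x\ne f(b)$. Let $\gamma$ be the hyperbolic geodesic in $\mathbb D$ joining $a,b$, so that $\gamma$ separates $\zeta_1$ from $\zeta_2$ and $\mathbb D\setminus\gamma=D_A\sqcup D_B$ with $\zeta_1$ on the $D_A$-side and $\zeta_2$ on the $D_B$-side. Then $f(\gamma)$ is a crosscut of $\Omega$ whose closure avoids $x$, so $d_0:=\dist\big(x,\overline{f(\gamma)}\big)>0$, and $\Omega\setminus f(\gamma)=f(D_A)\sqcup f(D_B)=:\Omega_A\sqcup\Omega_B$ with $x\in\partial\Omega_A\cap\partial\Omega_B$. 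For $r<d_0$ the ball $B(x,r)$ misses $f(\gamma)$, so $\Omega_A\cap B(x,r)$ and $\Omega_B\cap B(x,r)$ are relatively clopen in $\Omega\cap B(x,r)$ and no component can cross from one side to the other. Taking half-disk neighborhoods $N_i(\rho)=D(\zeta_i,\rho)\cap\mathbb D$ (with $N_1\subseteq D_A$, $N_2\subseteq D_B$ for small $\rho$) and letting $\Omega_r^i$ be the component of $\Omega\cap B(x,r)$ containing $f(N_i(\rho_i(r)))$, where $\rho_i(r)$ is chosen monotone in $r$ with $f(N_i(\rho_i(r)))\subseteq B(x,r)$, yields $\Omega_r^1\subseteq\Omega_A$ and $\Omega_r^2\subseteq\Omega_B$, hence $\Omega_r^1\ne\Omega_r^2$ and $x\in\partial\Omega_r^i$, while the monotone choice of $\rho_i$ gives the nestedness $\Omega_r^i\subseteq\Omega_s^i$ for $r<s<d_0$. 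This exhibits $x$ as two-sided.

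I expect the main obstacle to be exactly this last construction: one must produce, at \emph{every} small scale simultaneously, two genuinely distinct components that are also nested across scales, and the clean device for this is the separating crosscut $f(\gamma)$ whose image stays a definite distance $d_0$ from $x$ — it is what converts the purely conformal statement ``two preimages'' into the metric statement of Definition \ref{def:two-sidedpoint}. The enabling technical input is the boundary uniqueness theorem, which makes the fibers nowhere dense; this is what both yields $E=\tilde{E}$ and permits the choice of crosscut endpoints $a,b$ off the fiber.
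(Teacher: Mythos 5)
Your proof is correct, and for the decisive inclusion it takes a genuinely different route from the paper. The parts $E=\tilde E$ and $\mathcal T\subset\tilde E$ essentially coincide with the paper's Claims 1 and 2 (the paper cites Pommerenke to get that fibers $f^{-1}(\{x\})$ have Hausdorff dimension zero, where you use the weaker but sufficient fact that a fiber is compact and nowhere dense in $\partial\mathbb D$). The divergence is in the hard direction: the paper proves $E\subset\mathcal T$ by a dyadic induction over scales $2^{-i}$, and at each step it must invoke Lemma \ref{lma:componentsubsets} --- a John-dependent finiteness statement (only boundedly many components of $\Omega\cap B(x,r)$ reach $B(x,r/2)$) --- to extract a component with $x$ on its boundary. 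You bypass this entirely with the separating crosscut: since $f$ is injective and open on $\mathbb D$, one has $\Omega\setminus f(\gamma)=f(D_A)\sqcup f(D_B)$ with both pieces open, $\overline{f(\gamma)}=f(\gamma)\cup\{f(a),f(b)\}$ misses $x$, so for $r<d_0$ the set $\Omega\cap B(x,r)$ splits into two relatively clopen halves, and the images $f(N_i(\rho_i(r)))$ of half-disk neighborhoods of $\zeta_i$ select, at every scale simultaneously, a distinguished component on each side whose boundary contains $x=f(\zeta_i)\in\overline{f(N_i(\rho))}$; nestedness falls out of the monotone choice of $\rho_i$. What this buys you is that the John hypothesis enters only through the continuous extension of $f$ (i.e.\ through finite connectivity along the boundary), not through the quantitative John constant, so your argument is somewhat more general and makes the paper's counterexample remark (topologist's comb, whose Riemann map has no continuous extension) moot; the cost is only that you must verify the elementary crosscut topology of the disk, which you do. No gaps of substance.
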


In the proof of Theorem \ref{thm:equivalence} we need the following lemma.

\begin{lemma}\label{lma:componentsubsets}
Let $\Omega \subset \mathbb{R}^2$ be a simply connected John domain, let $x \in \partial\Omega$, and $r \in (0,\diam(\Omega))$. If there exist two disjoint open sets $U_1,U_2 \subset \Omega \cap B(x,r)$ such that $x \in \partial U_1\cap \partial U_2$ and both of the sets $U_1$ and $U_2$ are unions of connected components of $\Omega \cap B(x,r)$. Then there exist connected components $U_1'$ and $U_2'$ of $U_1$ and $U_2$ respectively, such that $x \in \partial U_1'\cap\partial U_2'$.  
\end{lemma}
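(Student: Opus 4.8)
The plan is to reduce everything to the fact, already invoked in this section, that a John domain is \emph{finitely connected along its boundary} (the N\"akki--V\"ais\"al\"a property \cite{NakkiVaisala1991}). Some such hypothesis is genuinely needed: without it the conclusion is false. For instance one could let $U_1$ be a union of infinitely many tiny components $C_n$ with $\dist(x,C_n)\to 0$ but $\diam(C_n)$ so small that $x\notin\overline{C_n}$ for every $n$; then $x\in\partial U_1$ while $x$ lies on the boundary of no single component of $U_1$. Finite connectedness at the boundary precisely rules out such accumulation of distinct components at small scales, and this is where the John assumption does the real work.

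First I would fix, using finite connectedness at $x$, a radius $\rho\in(0,r)$ for which $\Omega\cap B(x,\rho)$ has only finitely many connected components $V_1,\dots,V_k$. Since each $V_j$ is a connected subset of $\Omega\cap B(x,r)$, it is contained in a unique connected component $W_j$ of $\Omega\cap B(x,r)$. Because $x\in\partial U_1$, for every $n$ there is a point $p_n\in U_1$ with $|p_n-x|<\min\{\rho,1/n\}$; each such $p_n$ lies in $\Omega\cap B(x,\rho)$, hence in some $V_{j(n)}$. As there are only finitely many indices, the pigeonhole principle yields an index $j_0$ and a subsequence with $p_{n_\ell}\in V_{j_0}$ and $p_{n_\ell}\to x$, so that $x\in\overline{V_{j_0}}$ and $V_{j_0}$ meets $U_1$.

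The final step is to upgrade $V_{j_0}$ to a component of $U_1$ itself. Since $U_1$ is a union of connected components of $\Omega\cap B(x,r)$, its own connected components are exactly these $\Omega\cap B(x,r)$-components; as $V_{j_0}\subset W_{j_0}$ meets $U_1$ and $W_{j_0}$ is connected, the whole component $W_{j_0}$ lies in $U_1$ and is therefore a connected component of $U_1$. Moreover $x\in\overline{V_{j_0}}\subset\overline{W_{j_0}}$ while $x\notin\Omega\supset W_{j_0}$, so $x\in\partial W_{j_0}$; set $U_1':=W_{j_0}$. Running the identical argument for $U_2$ produces $U_2'$, and the disjointness of $U_1$ and $U_2$ forces $U_1'\neq U_2'$. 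I expect the only real obstacle to be pinning down the precise form of the finite-connectedness statement — in particular guaranteeing finitely many components inside a ball $B(x,\rho)$ of strictly smaller radius $\rho<r$ — together with the set-theoretic bookkeeping that a $B(x,\rho)$-component meeting $U_1$ is absorbed into a single $B(x,r)$-component contained in $U_1$; the remainder is routine point-set topology.
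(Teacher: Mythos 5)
Your argument is sound and reaches the same conclusion, but it takes a genuinely different route from the paper. The paper does not invoke the N\"akki--V\"ais\"al\"a finite-connectedness property as a black box; instead it proves directly, from the John condition, the quantitative statement that only finitely many components of $\Omega\cap B(x,r)$ can meet the half-ball $B(x,r/2)$ (John curves from such components must cross the sphere $S(x,2r/3)$, the John condition places disjoint balls of radius $Jr/6$ in distinct components there, and a packing count on the circle bounds their number). It then concludes exactly as you do, via $x\in\overline{\bigcup_j A_j}=\bigcup_j\overline{A_j}$ for a finite union --- which is your pigeonhole step phrased with closures. What the paper's approach buys is self-containedness and the weaker hypothesis that only the components meeting $B(x,r/2)$ need to be finite in number; what yours buys is brevity, since the finite-connectedness of John domains is already cited in this section.

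The one point you rightly flag as an obstacle is a real (though easily repaired) imprecision: finite connectedness at $x$ gives arbitrarily small \emph{neighborhoods} $U$ of $x$ with $U\cap\Omega$ having finitely many components, not \emph{balls} $B(x,\rho)$, and the ball version can genuinely fail (a sphere $S(x,\rho)$ may cut infinitely many tiny components off $\Omega$ near points of $\partial\Omega\cap S(x,\rho)$ far from $x$, even for smooth John domains). Fortunately your proof never uses that the small set is a ball: replacing $B(x,\rho)$ throughout by any neighborhood $U\subset B(x,r)$ of $x$ with finitely many components $V_1,\dots,V_k$ of $U\cap\Omega$, the sequence $p_n\to x$ eventually lies in $U\cap\Omega$, the pigeonhole argument selects $V_{j_0}$ with $x\in\overline{V_{j_0}}$ and $V_{j_0}\cap U_1\neq\emptyset$, and the absorption of $V_{j_0}$ into a single component $W_{j_0}$ of $\Omega\cap B(x,r)$ contained in $U_1$ goes through verbatim. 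With that substitution your proof is complete.
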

\begin{proof}
Let us first show that there exists $N \in \N$ such that 
\begin{equation}\label{eq:finitecomponents}
    \card{\{ \tilde{\Omega} 
    \,:\, \tilde{\Omega} \text{ connected component of }\Omega \cap B(x,r) \text{ such that } \tilde{\Omega} \cap B(x,r/2) \neq \emptyset\}} \leq N.
\end{equation}
Take $M\in \mathbb{N}$ components $\tilde{\Omega}_i$ as in \eqref{eq:finitecomponents}, and choose from each one a point $x_i \in \tilde{\Omega}_i\cap B(x,r/2)$. Let $\gamma_i$ be a John curve connecting $x_i$ to a fixed John center $x_0$ of $\Omega$. 
For each $i$ for which $x_0 \notin \tilde \Omega_i$, the curve $\gamma_i$ must exit $B(x,2r/3)$.
For these $i$ we consider points $y_i \in \gamma_i \cap S(x,2r/3)$, 
which then exist for all but maybe one of the indexes $i$. 
By the John condition there exists balls $B_i = B(y_i, Jr/6) \subset \tilde{\Omega}_i$.
As the balls $B_i$ are disjointed and $B_i$ covers  an arc of $S(x,2r/3)$ of length at least $J r/3$, we have $ (M-1) Jr/3   \leq \frac{4}{3} \pi r$, hence $M-1 \leq (\frac{Jr}{4\pi} ))^{-1}$.

Next we show that \eqref{eq:finitecomponents} implies the claim of the lemma. Define
\[
\{A_j\}_{j=1}^k := \{ \tilde{\Omega} \subset U_1 : \tilde{\Omega} \text{ connected component of }\Omega \cap B(x,r) \text{ such that } \tilde{\Omega} \cap B(x,r/2) \neq \emptyset\}.
\]
By \eqref{eq:finitecomponents} we have $k \le N$.
Since $U_1$ consists of connected components of $\Omega \cap B(x,r)$,
we have
\[
  U_1 \cap B(x,r/2) \subset \bigcup_{j=1}^k A_j.
\]
Now, because $x \in \overline{\bigcup_{j=1}^k  A_j}  = \bigcup_{j=1}^k \overline{A_j}$ there exists $j$ such that $x \in \overline{A_j}$. We call this $A_j$ the set $U_1'$. Similarly we find $U_2'$ for $U_2$.
\end{proof}

Notice that Lemma \ref{lma:componentsubsets} does not hold for general simply connected domain  $\Omega$, for example consider the topologist's comb. 

\begin{proof}[Proof of Theorem \ref{thm:equivalence}]
We divide the proof into several claims. Showing that
\[
\tilde{E} \subset E \subset \mathcal T \subset \tilde{E}.
\]

\medskip
\noindent
{\color{blue}\textsc{Claim 1}}: $\tilde{E} \subset E$.

Let $z \in \partial \Omega$ such that $\card (f^{-1} (z)) > 1$, and $ A = \partial\mathbb D \setminus f^{-1}(z)$. Let $x_1,x_2 \in f^{-1}(z)$.
By \cite[Theorem 10.9]{Pommerenke1992}, the set $f^{-1} (z)$
has Hausdorff dimension zero.
Therefore, we find points of $A$ from both components of $\partial\mathbb D \setminus \{x_1,x_2\}$. Hence $A$ is disconnected in $\partial\mathbb D$, and thus $z \in E$.

\medskip
\noindent
{\color{blue}\textsc{Claim 2}}: $\mathcal T \subset \tilde{E}$.

Let $z \in \mathcal{T}$.  By assumption there exists $R>0$ such that for each $0<r<R$ there exists disjoint  connected components $\Omega_r^1,\Omega_r^2 \subset \Omega\cap B(z,r)$, with the property that $\Omega_r^i\subset \Omega_s^i$ when $0<r<s$. Towards a contradiction, assume that $f^{-1}(z)$ is a singleton ($w=f^{-1}(z)$). By continuity of $f$ (up to the boundary) there exists $\varepsilon>0$ such that $f(B(w,\varepsilon) \cap \overline{\mathbb{D}}) \subset B(z,r)$. As a continuous image of a  connected set $f(B(w,\varepsilon)\cap \mathbb{D})$ is connected. We show that $f^{-1}(\Omega_{r}^j)\cap B(w,\varepsilon)\ne \emptyset$ for $j = 1,2$, which gives a contradiction with $\Omega_r^j$ being the disjoint connected components of $B(z,r)\cap \Omega$. Let $(z_i^j)_{i=1}^\infty \subset \Omega_r^j$ be a sequence such that $z_i^j \to z$. By going to a subsequence, we may assume that $(f^{-1}(z_i^j))_{i=1}^\infty$ converges to a point $w^j \in \overline{f^{-1}(\Omega_r^j)}$. Since $f$ is continuous, $f(w^j) = z$. But then $w^j = w$ by the uniqueness of the preimage of $z$. Hence, $f^{-1}(z_i^j) \to w$ meaning that for some $i$ we have $f^{-1}(z_i^j) \in B(w,\varepsilon)$ showing $f^{-1}(\Omega_{r}^j)\cap B(w,\varepsilon)\ne \emptyset$. Therefore, $\Omega_r^j \cap f(B(w,\varepsilon) \cap \mathbb{D}) \neq \emptyset$, connecting sets $\Omega_r^j$. This completes the proof.

\medskip
\noindent
{\color{blue}\textsc{Claim 3}}: $E \subset \mathcal T$.

 Let $z \in E$. We will show that $z \in \mathcal T$. We do this by first showing by induction that there exists $i_0 \in \mathbb N$ so that for all $i \ge i_0$ there exist connected components $\Omega_{2^{-i}}^j$ of $\Omega \cap B(z,2^{-i})$, $j \in \{1,2\}$, that are nested for fixed $j \in \{1,2\}$.
 At each step of the induction we will have to make sure that $z \in \partial \Omega_{2^{-i}}^1 \cap \partial \Omega_{2^{-i}}^2$.

\medskip

{\color{blue}\textsc{Initial step}}:
Let us show that there exists $r>0$ such that $B(z,r)\cap \Omega$ may be written as union of two disjointed open sets such that $z$ is contained in the boundary of both sets.
First, since $f^{-1} (z) = \cap_{r>0} f^{-1}(B(z,r)\cap \partial \Omega)$, there exists $R>0$ such that $H=f^{-1}(B(z,R)\cap \partial \Omega)$ is disconnected in $\partial \mathbb{D}$. By the continuity of $f$, $K=f^{-1}(\bar B (z,R/2))$ is a closed set in the closed disk $\bar {\mathbb{D}}$. Let 
$y_1,y_2 \in \partial \mathbb{D} \setminus H$ such that $y_1$ and $y_2$ are in different connected components of  $\partial \mathbb{D} \setminus H$. Define $e = \min (\dist (y_1,K), \dist(y_2,K))/2$. Now $K \setminus B(0,1-e)$ is disconnected in $\bar{\mathbb{D}}$. Next we notice that $\dist(f(\bar B(0,1-e)) , \partial \Omega) = R' >0$. Thus the original claim holds with the radius $r=\min(R,R')/2$. Let us now define $i_0 \in \mathbb N$ to be the smallest integer for which $2^{-i_0} \le r$. Call $U_1$ and $U_2$ the two disjoint open sets for which $z \in \partial U_1 \cap \partial U_2$ and 
$\Omega \cap B(x,2^{-i_0}) = U_1 \cup U_2$. By Lemma \ref{lma:componentsubsets} we have connected components $\Omega_{2^{-i_0}}^1 \subset U_1$ and  $\Omega_{2^{-i_0}}^2 \subset U_2$ of $\Omega \cap B(z,2^{-i_0})$ such that $z \in \partial \Omega_{2^{-i_0}}^1 \cap \partial \Omega_{2^{-i_0}}^2$.

\medskip

{\color{blue}\textsc{Induction step}}:
Assume that for some $i \in \mathbb{N}$ there exist disjoint connected components $\Omega_{2^{-i}}^1$ and $\Omega_{2^{-i}}^2$ of $\Omega \cap B(z,2^{-i})$ such that $z \in \partial \Omega_{2^{-i}}^1 \cap \partial \Omega_{2^{-i}}^2$. Let $U_1 = \Omega_{2^{-i}}^1 \cap B(z,2^{-i-1}).$ 

Let us show that $U_1$ is some union of connected components of $\Omega \cap B(z,2^{-i-1})$. Let $V$ be a connected component of $U_1$. It suffices to show that $V$ is a connected component of $\Omega \cap B(z,2^{-i-1})$. Take a connected component $V' \supset V $ of $\Omega \cap B(z,2^{-i-1})$. There exists connected component $W'$ of $\Omega \cap B(z,2^{-i})$ such that $W' \supset V'$.
Since $\emptyset \ne V \subset W' \cap  \Omega_{2^{-i}}^1$ we have $W' = \Omega_{2^{-i}}^1$. Furthermore $V' \subset \Omega_{2^{-i}}^1 \cap B(z,2^{-i-1}) = U_1$. As $V'$ is connected we have $V'=V$. 

Similarly for $U_2$. Now, by Lemma \ref{lma:componentsubsets} we may choose connected components $U_1' \subset U_1$ and $U_2'\subset U_2$ (of $\Omega \cap B(z,2^{-i-1})$) such that $z \in \partial U_1' \cap \partial U_2'$.

\medskip

{\color{blue}\textsc{General $r\in(0,2^{-i_0})$}}:
 Let $2^{-i-1} < r < 2^{-i}$. Let $\Omega_r^1$ be the connected component of $\Omega \cap B(z,r)$ containing $\Omega_{2^{-i-1}}^1$. Since $\Omega_{2^{-i}}^1$ is connected component of $\Omega \cap B(z,2^{-i})$ containing $\Omega_{2^{-i-1}}^1$, we have $\Omega_r^1 \subset \Omega_{2^{-i}}^1$. Let us show that $\Omega_r^1 \subset \Omega_s^1$ for all $0 < r < s$. Let $0 < r< s$. We consider two cases: (1) If $2^{-i-1}< r < s < 2^{-i}$ the sets $\Omega_r^1$ and $\Omega_s^1$ are connected components of $\Omega \cap B(z,r)$ and $ \Omega \cap B(z,s)$, respectively, both containing $\Omega _{2^{-i-1}} ^1$. Since $\Omega_r^1 \subset \Omega  \cap B(z,s)$ and $\Omega_r^1$ is connected we have $\Omega_r^1 \subset \Omega_s^1$.

(2) If $2^{-i-1} \leq r \leq 2^{-i}  \leq 2^{-j-1} \leq s \leq 2^{-j}$ sets $\Omega_r^1$ and $\Omega_s^1$ are connected components of  $\Omega\cap B(z,r)$ and $\Omega\cap B(z,s)$ which contain $\Omega_{2^{-i-1}}^1$ and $\Omega_{2^{-j-1}}^1$, respectively. Similarly as in (1) we have $\Omega_r^1 \subset \Omega_{2^{-i}}^1 \subset \cdots \subset \Omega_{2^{-j-1}}^1 \subset \Omega_s^1$. 
\end{proof}

\section{Upper bound for the dimension of the set of two-sided points}
 In this section we prove Theorem \ref{thm:main} that establishes an upper bound for the Hausdorff dimension of the set of two-sided points $\mathcal T$ for a planar simply connected $W^{1,p}$-extension domain when $1 < p < 2$. We do this by using one of the equivalent definitions of two-sided points given in Theorem \ref{thm:equivalence}.
 Namely, we consider
 $$
  E = \{x \in \partial \Omega : f^{-1} (\{x\}) \text{ disconnects } \partial \mathbb D \},
 $$
 where $f \colon \mathbb D \to \Omega$ is a conformal map extended continuously to a function $\overline{\mathbb D} \to \overline{\Omega}$ still denoted by $f$ (see the beginning of Section \ref{sec:equivalence}).

The idea of the proof is to reduce the dimension estimate of $E$ to a dimension estimate of $E$ along a single curve $\gamma$ satisfying \eqref{eq:curve_condition_intro}. This is possible by Lemma \ref{lma:countablecurves}. Then, on each $\gamma$ the dimension estimate is obtained via Lemma \ref{lma:hausdorffdim} by estimating the number of balls needed to cover the set $E \cap \gamma$ at different scales.

Following the ideas of  \cite[Lemma 4.6]{KoskelaW1pChar} we first show that the set of two-sided points can be covered by a countable union of curves fulfilling condition \eqref{eq:curve_condition_intro}.
\begin{lemma}\label{lma:countablecurves}
Let $1<p<2$ and let $\Omega \subset \mathbb R ^2$ be bounded simply connected $W^{1,p}$ Sobolev extension domain. Then
there exists a countable collection $\Gamma$ of curves satisfying \eqref{eq:curve_condition_intro} so that for the set $E$ of two-sided points we have
$$E \subset \bigcup_{\gamma \in\Gamma} \gamma^o \cap \partial \Omega,$$
where $\gamma^o$ denotes the curve $\gamma$ without its endpoints.
\end{lemma}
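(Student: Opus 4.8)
The plan is to exploit the equivalence $E = \tilde E$ from Theorem \ref{thm:equivalence}, so that every $x \in E$ has at least two distinct preimages under the boundary extension $f\colon \overline{\D} \to \overline{\Omega}$. I would fix such a pair $w_1, w_2 \in \partial \D$ with $f(w_1) = f(w_2) = x$. The idea is to push a curve from near $w_1$ to near $w_2$ through the \emph{complement} of $\Omega$, using the curve condition \eqref{eq:curve_condition_intro}, and arrange that the image data give a countable family. Concretely, for each pair of rational points $q_1, q_2 \in \partial \D$ (or points in a fixed countable dense subset), one considers the images $f(q_1), f(q_2) \in \partial \Omega$; if these two boundary points are distinct, one picks a curve $\gamma_{q_1,q_2} \subset \R^2 \setminus \Omega$ joining them as provided by \eqref{eq:curve_condition_intro}. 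Since the index set of pairs $(q_1,q_2)$ is countable, this produces a countable collection $\Gamma$ of curves each satisfying \eqref{eq:curve_condition_intro}.

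\medskip

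The heart of the argument is to show $E \subset \bigcup_{\gamma \in \Gamma} \gamma^o \cap \partial \Omega$. Fix $x \in E$ with distinct preimages $w_1, w_2$. These preimages disconnect $\partial \D$ into two open arcs $A_1, A_2$. The key point is that $f(A_1)$ and $f(A_2)$ are two boundary subarcs meeting only at $x$ (and possibly other identified points), and $x$ genuinely separates the complement locally: approaching $x$ from the two sides of $\partial \D$ lands on the two components $\Omega^1_r, \Omega^2_r$ of the two-sided structure. I would choose rational points $q_1 \in A_1$ and $q_2 \in A_2$ close to $w_1$ (so that $f(q_1), f(q_2)$ are close to $x$ but distinct from it), and argue that the connecting curve $\gamma_{q_1,q_2} \subset \R^2 \setminus \Omega$ must pass through $x$: since $x$ is a two-sided point it separates a neighborhood, and a curve in the complement joining the two sides is forced to cross $\partial \Omega$ precisely at such a two-sided point. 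Letting the rational points shrink toward $w_1, w_2$ forces the relevant curve to contain $x$ in its interior $\gamma^o$.

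\medskip

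The main obstacle I anticipate is the topological separation step: making rigorous that the complementary curve joining the two ``sides'' at $x$ is trapped at $x$ itself, rather than escaping through some other gap in $\partial \Omega$. The John (and finite-connectedness-along-the-boundary) property, together with the nestedness from Definition \ref{def:two-sidedpoint} and the control on the finitely many components near $x$ from Lemma \ref{lma:componentsubsets}, should force any complement curve connecting $\Omega^1_r$-side to $\Omega^2_r$-side through $B(x,r)$ to pass through the common boundary point $x$. One must verify that, after passing to the countable family indexed by rational pairs, a single $\gamma \in \Gamma$ actually contains the given $x$ in its open part; this requires choosing the rational approximants carefully so that $x$ lies strictly between the endpoints $f(q_1), f(q_2)$ along $\gamma$, i.e.\ $x \in \gamma^o$ rather than at an endpoint. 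Handling this endpoint-versus-interior distinction, and ensuring the crossing is at $x$ and not at another two-sided point, is where the delicate work lies, and I expect to follow the argument of \cite[Lemma 4.6]{KoskelaW1pChar} closely here.
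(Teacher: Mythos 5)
Your construction of the countable family $\Gamma$ is exactly the paper's: fix a countable dense set on $\partial\mathbb D$ and, for each pair of its points with distinct images, choose a complementary curve satisfying \eqref{eq:curve_condition_intro}. The gap is in the containment step, and it is precisely the step you flag as ``the main obstacle'': you never actually prove that the chosen curve $\gamma_{q_1,q_2}$ passes through $x$. Your proposed mechanism --- that $x$ ``separates a neighborhood'' and a complement curve ``joining the two sides'' must cross $\partial\Omega$ at $x$ --- does not work as stated. The two sides $\Omega_r^1,\Omega_r^2$ are components of $\Omega\cap B(x,r)$, whereas $\gamma_{q_1,q_2}$ lies in $\mathbb R^2\setminus\Omega$ and is in no way confined to $B(x,r)$; it can wander arbitrarily far from $x$, so no local separation argument at $x$ can trap it. The subsequent limiting step (``letting the rational points shrink toward $w_1,w_2$'') is also unsound: a sequence of curves passing near $x$ does not produce a single member of the countable family containing $x$, and the lemma requires $x$ to lie on one fixed curve of $\Gamma$.

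The idea you are missing is global and topological. With $x_i\in I_a$, $x_j\in I_b$ chosen from the dense set so that $f(x_i)\ne z\ne f(x_j)\ne f(x_i)$, consider $\tilde\gamma=f([x_i,0]\cup[0,x_j])$, the image of two radii; this lies in $\Omega\cup\{f(x_i),f(x_j)\}$. Taking $\gamma_{i,j}$ injective, the union $\tilde\gamma\cup\gamma_{i,j}$ is a Jordan curve. The two radii split $\mathbb D$ into components $A$ and $B$ with $x_a\in\overline A$, $x_b\in\overline B$, so $z\in\overline{f(A)}\cap\overline{f(B)}\subset\overline{\tilde A}\cap\overline{\tilde B}=\tilde\gamma\cup\gamma_{i,j}$, where $\tilde A,\tilde B$ are the two Jordan domains. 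Since $z\in\partial\Omega$ and $\tilde\gamma$ meets $\partial\Omega$ only at the endpoints $f(x_i),f(x_j)$, both distinct from $z$, it follows that $z\in\gamma_{i,j}^o$. This single Jordan-curve argument replaces both your local separation claim and your limiting argument; without it (or an equivalent substitute) the proof is incomplete.
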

\begin{proof}
Let  $f: \overline{\mathbb{D}} \to \overline{\Omega}$ be continuous and conformal in $\mathbb{D}$.
	Let $\{x_j\} \subset \partial \mathbb D$ be dense and 
	for each pair $(x_j,x_i)$, $i \ne j$ select a curve $\gamma_{i,j}$ satisfying \eqref{eq:curve_condition_intro} between the points $f(x_i)$ and $f(x_j)$. Define
	$\Gamma= \{\gamma_{i,j}\,:\, i \ne j\}$.
	
	Now, let $z \in E$. By the definition of $E$ there exist $x_a, x_b \in f^{-1}(z)$, $x_a \neq  x_b$ which divide $\partial \mathbb D$ into two components $I_a$ and $I_b$, so that $f(I_a) \ne \{z\} \ne f(I_b)$. By the continuity of $f$, there exist $i, j$, $i \neq j$ such that $x_i \in I_a$ and $x_j \in I_b$ and $f(x_i) \neq z \neq f(x_j)\ne f(x_i)$. 
	 Let $\gamma_{i,j} \in \Gamma$ be the curve connecting $f(x_i) =: z_i$ and $f(x_j) =: z_j$.  Let  $\tilde \gamma := f([x_i, 0 ] \cup [0, x_j])$. The curve $[x_i, 0 ] \cup [0, x_j]$ divides $\mathbb D$ into two components $A$ and $B$. 
By interchanging $A$ and $B$ if necessary, we have $x_a \in \bar A$ and $x_b \in \bar B$, and by continuity $z \in \overline{f(A)} \cap \overline{f(B)}$. 

Since, the curve $\gamma_{i,j}$ may be assumed to be injective (\cite[Lemma 3.1]{Falconer86}), and since $z_i \ne z_j$, 
the curve $\tilde \gamma \cup \gamma_{i,j}$ is Jordan.
 Let $\tilde{A}$ and $\tilde{B}$ be the corresponding Jordan components. Since $f(A) \subset \tilde A$, $f(B) \subset \tilde B$ we have $z \in \overline{\tilde A} \cap \overline{\tilde B} = \gamma_{i,j} \cup \tilde \gamma$.
Furthermore, since $\tilde \gamma \subset f(\mathbb D) \cup \{z_i,z_j\} = \Omega \cup \{z_i,z_j\}$,
we have $z \in \gamma_{i,j}$.
\end{proof}

To prove Theorem \ref{thm:main} we need the following sufficient condition for an upper bound of the Hausdorff dimension.

\begin{lemma} \label{lma:hausdorffdim}
Let $\gamma \colon J \to \mathbb{R}^2$ be a rectifiable curve from a compact interval $J \subset \R$ and $E \subset \gamma(J)$. Let $0 < \lambda < 1$ and $i_0 \in \mathbb{N}$.
Define for each $i \geq i_0$ a maximal $\lambda^i$-separated net 
$$\{x_k^i \} _{k \in I_i} \subset E\cap \gamma(J).$$
Assume that the following holds: For each $i \geq i_0$ and $k \in I_i$ there exists $j > i$, such that
$$N_j < \lambda^{-(j-i) s },$$
where $N_j = \card(\{ l \in I_j : B(x_l^j, \lambda^{j} )\cap B(x_k^i, \lambda^{i}) \neq \emptyset \})$. Then $\dim_{\mathcal H } (E) \leq s$.
\end{lemma}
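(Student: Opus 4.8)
The plan is to build an iterated, Cantor-type covering of $E$ whose total $s$-weight never increases under refinement, and then let the mesh tend to zero. The starting observation is that a maximal $\lambda^i$-separated net of $E = E\cap\gamma(J)$ is automatically $\lambda^i$-dense: by maximality, every point of $E$ lies within $\lambda^i$ of some $x_k^i$, so the balls $\{B(x_k^i,\lambda^i)\}_{k\in I_i}$ cover $E$. Since $\gamma(J)$ is compact, being the continuous image of the compact interval $J$, and the net is $\lambda^i$-separated, each index set $I_i$ is finite.

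The key algebraic point is weight conservation under the refinement supplied by the hypothesis. Assign to each net ball $B(x_k^i,\lambda^i)$ the weight $(2\lambda^i)^s = (\diam B(x_k^i,\lambda^i))^s$. Given such a ball, the hypothesis furnishes $j>i$ with $N_j<\lambda^{-(j-i)s}$, where $N_j$ counts the finer net balls $B(x_l^j,\lambda^j)$ meeting $B(x_k^i,\lambda^i)$. Because $\{B(x_l^j,\lambda^j)\}_{l\in I_j}$ covers $E$, the $N_j$ of them that meet $B(x_k^i,\lambda^i)$ already cover $E\cap B(x_k^i,\lambda^i)$; and their total weight is $N_j(2\lambda^j)^s<\lambda^{-(j-i)s}(2\lambda^j)^s=(2\lambda^i)^s$, i.e.\ at most the weight of the parent. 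Thus one may replace a ball by finitely many strictly smaller balls that cover the same portion of $E$ without increasing total weight — precisely what the exponent $\lambda^{-(j-i)s}$ is engineered to deliver.

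I would then iterate this refinement to produce, for each $\delta>0$, a finite cover of $E$ by balls of diameter $\le 2\delta$ with total $s$-weight at most $W_0:=\card(I_{i_0})(2\lambda^{i_0})^s$. Concretely, organize the refinements as a finite rooted tree: the roots are the balls $B(x_k^{i_0},\lambda^{i_0})$, and each ball of radius $>\delta$ is refined into its children while each ball of radius $\le\delta$ is declared a leaf. Since each refinement raises the scale index by at least one, so the radius drops by at least a factor $\lambda$, every branch reaches radius $\le\delta$ after finitely many steps, and since each node has finitely many children the whole tree is finite. One induction on the tree shows the leaves still cover $E$ (refinement preserves coverage), and a second induction — using ``children weigh no more than their parent'' at every internal node — shows the leaf weights sum to at most $W_0$. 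Hence $\mathcal H^s_{2\delta}(E)\le W_0$ for every $\delta>0$, so $\mathcal H^s(E)\le W_0<\infty$ and therefore $\dim_{\mathcal H}(E)\le s$.

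The only genuine subtlety, and the step I expect to need the most care, is that the refinement scale $j$ depends on the individual ball rather than being uniform across a level; this is why a clean level-by-level argument fails and one must instead run the ``refine until the radius drops below $\delta$'' stopping rule, verifying both that it terminates (finite branching together with a strictly increasing scale index) and that the weight bound survives the non-uniform depths. Overlaps among children of different parents are harmless, since for an upper Hausdorff-measure bound we only need a cover and we control the weight sum counted with multiplicity.
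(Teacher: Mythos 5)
Your proposal is correct and is essentially the paper's own argument: maximality of the net gives a cover, the hypothesis $N_j<\lambda^{-(j-i)s}$ gives weight conservation $N_j(2\lambda^j)^s<(2\lambda^i)^s$ under refinement, and iterating yields $\mathcal H^s_\delta(E)\le\card(I_{i_0})(2\lambda^{i_0})^s$ for every $\delta$. The only (cosmetic) difference is that you stop each branch individually once its radius drops below $\delta$, whereas the paper iterates synchronously and observes that after $n-i_0$ refinement steps every ball has scale index at least $n$, hence diameter at most $2\lambda^n$ --- which disposes of the ``non-uniform $j$'' subtlety you flag without any tree bookkeeping.
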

\begin{proof}
Define $\mathcal B _{i_0} = \{ B(x_k^{i_0}, \lambda^{i_0}) : k \in I_{i_0}\}$ and inductively for $n > i_0$ by 
$$\mathcal B _{n} = \bigcup_{B(x_k^i, \lambda ^i ) \in \mathcal B _{n-1}} \{  B(x_m^j , \lambda ^j ) : B(x_m ^j , \lambda ^j ) \cap B(x_k^i , \lambda ^i) \neq \emptyset \},$$
where $j=j(i,k)>i$ is given by the assumption.
Clearly $\mathcal B _n$ is a cover of $E$ for each $n \geq i_0$, and for all $B \in \mathcal {B}_n$ 
$$\diam (B) \leq 2 \lambda^n.$$
By assumption, for each $B= B(x^{i}_{k},\lambda^i) \in \mathcal B_{n-1}$ and with $j=j(i,k)$ again given by the assumption
$$\sum_{B(x_m^j, \lambda^j) \cap B \neq \emptyset} \diam (B(x_m^j , \lambda ^j ))^s = N_j (2 \lambda^j)^s < (2 \lambda ^i )^s = \diam (B)^s,$$ 
and therefore 
$$\sum_{ B \in \mathcal B _{n}} \diam(B) ^s \leq \sum _{B \in \mathcal B_{n-1}} \diam(B)^s.$$

Let $\delta >0$ and choose $n \in \mathbb N$ such that $2 \lambda ^n < \delta$. Now
\begin{align*}
\mathcal H_\delta^s (E) &\leq \sum_{B \in \mathcal B _{n}} \diam(B)^s \leq 
\sum_{B \in \mathcal B _{n-1}} \diam(B)^s \leq \ldots \\
& \leq \sum_{B \in \mathcal B _{i_0}} \diam(B)^s \leq \card({I_{i_0}})(2\lambda^{i_0})^s < \infty.
\end{align*}
By letting $\delta \to 0$, we get $\mathcal H^s (E) \le \card({I_{i_0}})(2\lambda^{i_0})^s < \infty$, and consequently 
$\dim_{\mathcal H } (E) \leq s$.
\end{proof}

\begin{proof}[Proof of Theorem \ref{thm:main}]
By Theorem \ref{thm:equivalence} we have $E = \mathcal T$. Let $\Gamma$ be the set of curves given in Lemma \ref{lma:countablecurves}. Let $\gamma \in \Gamma$ and
define the set
$$\{x_k^i\}_{k \in I_i} \subset E\cap \gamma$$
to be a maximal $2^{-i}$ separated net for all $i \in \mathbb{N}$. 
Take $s < \dim_ {\mathcal H} (E \cap \gamma)$. Then, by Lemma \ref{lma:hausdorffdim}, there exists $i \in \mathbb{N}$ and $k \in I_{i}$ such that $N_j \geq 2^{(j-i)s}$ for all $j >i$, where 
$$N_j = \card(\{ l \in I_j : B(x_l^j, 2^{-j} )\cap B(x_k^i, 2^{-i}) \neq \emptyset \}).$$ 
Note that, trivially also $N_i\geq 1$.
Denote $B = B(x_{k}^{i}, 2^{-i+1})$.
For all $j > i+1$ the ball $B$ contains at least $N_{j-1}$ pairwise disjoint balls $B(x_l^{j-1},2^{-j})$ centered at $E \cap \gamma$, and so we have
\begin{equation}\label{eq:hausdorffestimate}
    \mathcal H^1 ( \{ z \in \gamma \cap B : d(z, \partial \Omega) < 2^{-j} \} ) \ge N_{j-1} 2^{-j}.
\end{equation}
Using \eqref{eq:curve_condition_intro}, Cavalieri's principle,  \eqref{eq:hausdorffestimate}, and Lemma \ref{lma:hausdorffdim} we estimate

\begin{align*}
C 2^{-(i-2)(2-p)} &\geq \int _{\gamma \cap B} \dist (z, \partial \Omega)^{1-p} \,{\rm d}z \\
& = \int _0^\infty \mathcal H^1 ( \{ z \in \gamma \cap B : d(z, \partial \Omega)^{1-p} > t \} ) \,{\rm d}t \\
&=\int_0^\infty \mathcal H^1 ( \{ z \in \gamma \cap B : d(z, \partial \Omega) < t^{\frac1{1-p}} \} ) \,{\rm d}t \\
& = \sum _{j \in \mathbb{Z} } \int_{2^{-(j-1)(1-p)}}^{2^{-j(1-p)}} \mathcal H^1 ( \{ z \in \gamma \cap B : d(z, \partial \Omega) < t^{\frac1{1-p}} \} ) \,{\rm d}t \\
& \geq \sum _{j =i+1 } \int_{2^{-(j-1)(1-p)}}^{2^{-j(1-p)}} \mathcal H^1 ( \{ z \in \gamma \cap B : d(z, \partial \Omega) < 2^{-j} \} ) \,{\rm d}t \\
&  \geq  \sum_{j =i+1}  2^{-j(1-p)}(1-2^{1-p})  N_{j-1}2^{-j} \\
& \geq \sum_{j =i+1} (2^{p-1}-1)2^{-(j-1)(1-p)} 2^{(j-1-i)s} 2^{-j} ,
\end{align*}
which implies
\begin{equation}
\begin{split}
    C  &\geq(2^{p-1}-1)2^{2p-5} \sum_{j=i+1} ^\infty 2^{(j-i-1) (s+p-2)}   \\
& = (2^{p-1}-1)2^{2p-5} \frac{1}{1- 2^{-(2-(p+s))}}.  \label{equ200}
\end{split}
\end{equation}
A reordering of \eqref{equ200} gives
$$s \leq 2-p + \log _{2} \left(1- \frac{2^{2p-5}(2^{p-1}-1)}{C}\right).$$
Since $s < \dim_{\mathcal H} (E \cap \gamma)$ was arbitrary, we have
\[
\dim_{\mathcal H} (E \cap \gamma)\leq 2-p + \log _{2} \left(1- \frac{2^{2p-5}(2^{p-1}-1)}{C}\right).
\]
Recalling that $\Gamma$ is countable, and that by Lemma \ref{lma:countablecurves}
\[
E \subset \bigcup_{\gamma \in \Gamma} E\cap \gamma,
\]
the claim follows.
\end{proof}

\section{Sharpness of the dimension estimate}\label{sec:sharpness}

In this section we show the sharpness of the estimate given in Theorem 1.2. We do this by constructing a domain whose set of two-sided points contains a Cantor type set.

Let $0<\lambda < 1/2 $. Let $\mathcal C_\lambda$ be the standard Cantor set obtained as the attractor of the iterated function system $\{f_1 = \lambda x , f_2 = \lambda x +1 - \lambda\}$. For later use we fix some notation. Let $I^1_{0}=[0,1]$, and $\tilde{I}^1_{1}:=(\lambda, 1 -\lambda)$ be the first removed interval. 
We denote by $I_j^i$ the $2^j$ closed intervals left after $j$ iterations, and similarly the $2^{j-1}$ removed open intervals by $\tilde{I}_j^i$.  The lengths of the intervals are
    $$| I_j^i |  =\lambda  ^j , \quad i=1, \ldots, 2^j, j=0,1,2, \ldots$$
and 
$$|\tilde{I}_j^i| = (1-2\lambda) \lambda^{j-1} , \quad i=1, \ldots, 2^{j-1},  j=1,2,3,\ldots.$$ Recall that,  $\mathcal C_\lambda$ is of zero $\mathcal H^1$-measure, and $\dim_{\mathcal H} (\mathcal C_\lambda) = \frac{\log 2}{-\log \lambda}$  (see e.g. \cite[p.60--62]{Mattila1999}).
    
Define $$\Omega_\lambda = (-1,1)^2 \setminus \{(x,y) : x\geq 0, |y| \leq d(x, \mathcal C_\lambda) \}.$$ Set $\Omega_\lambda$ is clearly a domain and the set of two-sided points is $\mathcal C_\lambda \setminus \{(0,0)\}$.

\begin{lemma}\label{lma:constantestimate}
The domain $\Omega_\lambda$ above satisfies the curve condition \eqref{eq:curve_condition_intro} for $1 < p < 2+\frac{\log 2}{\log \lambda} $. That is,
for each $x,y \in \Omega_\lambda^c$ there exists rectifiable curve $\gamma \colon [0,l(\gamma)] \to \Omega_\lambda^c$ connecting $x,y$ such that 

\begin{equation}
\int_\gamma \dist(z, \partial \Omega_\lambda) ^{1-p} \,{\rm d}s(z) \leq C(p, \lambda) | x-y| ^{2-p}. \label{eq:lma_curvecondition}
\end{equation}

Moreover, we have the estimate
\[
C(p,\lambda) \le \frac{c}{(2-p)\lambda^{2-p}(1-2 \lambda^{2-p})}
,
\]
where $c$ is an absolute constant.
\end{lemma}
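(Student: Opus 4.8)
The plan is to exploit the self-similar structure of $\mathcal C_\lambda$ and to reduce \eqref{eq:lma_curvecondition} to a one-dimensional integral estimate. First I would describe the complement: $\R^2\setminus\Omega_\lambda$ is the exterior of the square $(-1,1)^2$ together with the ``tent'' region $S=\{(x,y):x\ge0,\ |y|\le d((x,y),\mathcal C_\lambda)\}$, which over each removed interval $\tilde I_j^i$ is a closed diamond $D_j^i$ (a square rotated by $45^\circ$, of diameter $|\tilde I_j^i|$) pinched to a point at each Cantor endpoint. For a slit point $z=(a,b)\in S$ one checks the distance estimate $\dist(z,\partial\Omega_\lambda)\asymp d(a,\mathcal C_\lambda)-|b|$ with absolute constants (so on the axis $\dist((a,0),\partial\Omega_\lambda)\asymp d(a,\mathcal C_\lambda)$); the only delicate point is near a Cantor pinch, where small diamonds accumulate from one side against a gap on the other, but the nearest boundary is always a diamond edge and the estimate survives. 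The decisive feature is that, since $p<2$, $\int\dist^{1-p}\,ds$ converges along a path running into a pinch. Connecting two points in the exterior/fan region $\{x\ge1\}$, or connecting a slit point out to the exterior, is routine with an absolute constant (these pieces are uniform with absolute constants), so I would reduce to $x,y\in S$ with projections $a_1\le a_2$ in $[0,1]$.

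Second, the analytic core is the estimate that for every subinterval $[a_1,a_2]\subset[0,1]$,
$$\int_{a_1}^{a_2} d(a,\mathcal C_\lambda)^{1-p}\,da \ \le\ \frac{c\,(a_2-a_1)^{2-p}}{(2-p)\,\lambda^{2-p}\,(1-2\lambda^{2-p})}.$$
To prove it I first compute $K:=\int_0^1 d(a,\mathcal C_\lambda)^{1-p}\,da$ by summing over removed intervals: on a tent of length $|\tilde I_j^i|$ one has $\int_{\tilde I_j^i} d^{1-p}=\tfrac{2^{p-1}}{2-p}|\tilde I_j^i|^{2-p}$, and with $2^{j-1}$ intervals of length $(1-2\lambda)\lambda^{j-1}$ at generation $j$ this gives the geometric series $\sum_{j\ge1}(2\lambda^{2-p})^{j-1}$, whence $K=\frac{2^{p-1}(1-2\lambda)^{2-p}}{(2-p)(1-2\lambda^{2-p})}$. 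The series converges exactly when $2\lambda^{2-p}<1$, i.e. $p<2+\frac{\log2}{\log\lambda}=2-\dim_{\mathcal H}\mathcal C_\lambda$, matching the hypothesis and producing both the $\frac1{2-p}$ and the $\frac1{1-2\lambda^{2-p}}$ factors. For a general subinterval of length $\ell=a_2-a_1$ I pick $n$ with $\lambda^{n+1}\le\ell<\lambda^n$; then $[a_1,a_2]$ meets at most two kept intervals $I_n^i$ (three would span more than $\lambda^n>\ell$), on each of which $d(\cdot,\mathcal C_\lambda)$ is the scaled copy so $\int_{I_n^i}d^{1-p}=\lambda^{n(2-p)}K$, and finally $\lambda^{n(2-p)}\le\ell^{2-p}\lambda^{-(2-p)}$ yields the displayed bound --- the factor $\lambda^{-(2-p)}$ being precisely the scale mismatch allowed by $\ell\ge\lambda^{n+1}$.

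Third, I assemble the curve. Given $x,y\in S$ with $L=|x-y|$ I split into two cases. If $x,y$ lie in the closure of one diamond $D_j^i$, I route inside $D_j^i$: since it is convex (indeed a uniform domain with absolute constants) there is a scale-adaptive path, staying at depth $\asymp\min(L,\text{available depth})$, with $\int\dist^{1-p}\,ds\lesssim \frac{L^{2-p}}{2-p}$. Otherwise the projections satisfy $a_2-a_1\ge d(a_i,\mathcal C_\lambda)$ for $i=1,2$ (the other point lies beyond a Cantor endpoint of the removed interval carrying $a_i$), and I use the near-axis route: drop from $x$ to $(a_1,0)$, run along the positive axis to $(a_2,0)$, and rise to $y$. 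The two vertical legs cost $\lesssim \frac{d(a_i,\mathcal C_\lambda)^{2-p}}{2-p}\le \frac{L^{2-p}}{2-p}$, while the axis leg costs $\asymp \int_{a_1}^{a_2}d^{1-p}\,da$, bounded by the core estimate with $a_2-a_1\le L$. In both cases $\int_\gamma\dist^{1-p}\,ds\lesssim \frac{L^{2-p}}{(2-p)\lambda^{2-p}(1-2\lambda^{2-p})}$ (the secondary $\frac{L^{2-p}}{2-p}$ terms are absorbed since $\lambda^{2-p}(1-2\lambda^{2-p})\le1$), which is \eqref{eq:lma_curvecondition} with the asserted $C(p,\lambda)$.

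I expect the main difficulty to be the geometry of the routing rather than the arithmetic. Specifically, verifying $\dist(z,\partial\Omega_\lambda)\asymp d(a,\mathcal C_\lambda)-|b|$ uniformly near the pinches (two-sided accumulation of diamonds) and producing the within-diamond scale-adaptive path with the correct $\frac1{2-p}$ dependence and absolute remaining constant are the steps needing the most care; the case split (common diamond versus separated) is exactly what prevents overcharging when a point sits high inside a large diamond while its partner is nearby. Once these geometric facts are in place, the named constant is transparent from the self-similar sum of Step two.
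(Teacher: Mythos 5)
Your proposal is correct and follows essentially the same route as the paper: the same three-case reduction (both points outside the square, both inside, mixed through the gateway $(1,0)$), the same projection of interior points onto the real axis, and the same self-similar geometric series $\sum_k (2\lambda^{2-p})^k$ over the removed intervals, converging exactly for $p<2+\frac{\log 2}{\log\lambda}$ and yielding the constant $\frac{c}{(2-p)\lambda^{2-p}(1-2\lambda^{2-p})}$ through the same scale-mismatch factor $\lambda^{-(2-p)}$ coming from $\lambda^{n+1}\le \ell<\lambda^{n}$.

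The one substantive difference is your separate case in which $x$ and $y$ lie in the closure of a single diamond $D_j^i$, handled by a uniform-domain curve inside the diamond instead of dropping to the axis. This is not cosmetic: the paper's curve always descends to the axis, and the final inequality of \eqref{eq1} implicitly uses $|x-\tilde x|\le|x-y|$, which fails exactly when both points sit high in the same diamond and close together (the vertical legs then cost a fixed amount while $|x-y|^{2-p}\to 0$). Your case split is what makes the axis route legitimate in the remaining configurations, via the observation $|a_2-a_1|\ge d(a_i,\mathcal C_\lambda)\ge |b_i|$; and since $\dist(z,\partial\Omega_\lambda)=\dist(z,\partial D_j^i)$ for $z\in D_j^i$ (all other boundary lies outside the convex diamond), the within-diamond bound $\int_\gamma \dist(z,\partial\Omega_\lambda)^{1-p}\,{\rm d}s(z)\lesssim |x-y|^{2-p}/(2-p)$ is sound. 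Two details to tighten in a full write-up: the core one-dimensional estimate should also account for the (at most two) portions of $[a_1,a_2]$ lying in removed intervals of generation at most $n$, which contribute an extra $O(\ell^{2-p}/(2-p))$ exactly as the paper's term for $\tilde I_j^i$; and the leg joining a slit point to the exterior through $(1,0)$ is not controlled by an absolute constant alone --- it requires the full core estimate together with $|x-(1,0)|\lesssim |x-y|$, as in the paper's case (iii).
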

\begin{proof}
We consider three cases: 
(i) Assume first that $x,y \in \mathbb{R}^2\setminus (-1,1)^2$. Define $\gamma$ as a path of minimal length made of at most four line segments $L_k$  with slope $\pm \frac{\pi}{4}$ such that $\gamma \subset \Omega^c$.  Now for each segment $L_k$

$$\int_{L_k} \dist(z, \partial \Omega)^{1-p} \,{\rm d}s (z) \leq 2\int_0^{|L_k|} (t/\sqrt{2})^{1-p} \,{\rm d}t = \frac{2^{\frac{1-p}{2}}}{2-p} | L_k| ^{2-p} \leq \frac{c}{2-p} |x-y|^{2-p}, $$
where $|L_k|$ is the length of the segment $L_k$.

(ii) Assume $x,y \in \Omega^c \cap (-1,1)^2$. Denote $x=(x_1,x_2)$, $y=(y_1,y_2)$, $\tilde{x}=(x_1,0)$ and $\tilde{y}=(y_1,0)$ and define $\gamma= [x,\tilde{x}]\ast [\tilde{x},\tilde{y}] \ast [\tilde{y} , y]$. 
For $[x,\tilde x]$ (and similarly for $[\tilde y , y ]$) by the geometry of the set $\Omega$
\begin{equation} \label{eq1}
\begin{split}
\int _{[x,\tilde{x}]} \dist(z, \partial \Omega) ^{1-p} \,{\rm d}s (z) & \leq  \int _0^{|\tilde{x}-x|} (t/\sqrt 2) ^{1-p} \,{\rm d} t\\
&=  \frac{2^{\frac{1}{2}(p-1)}}{2-p} |x-\tilde{x}|^{2-p} \\
& \leq  \frac{2^{\frac{1}{2}(p-1)}}{2-p} |x-y|^{2-p}.
\end{split}
\end{equation}

For the segment $[\tilde x, \tilde y]$ let $j \in \mathbb N$ be such that 
$$\lambda^{j} < | \tilde{x}- \tilde{y} | \leq \lambda^{j-1}.$$
Now, $[\tilde x, \tilde y]$ intersects at most two of the intervals $I_{j}^i$ and one interval $\tilde{I}_{j}^i$, where $I_{j}^i$ and $\tilde{I}_j^i$ are the closed and open intervals, respectively, related to the $j$th step of the construction of the Cantor set.  
For every $j$ and $i$ we have
\begin{equation}\label{eq:keskimmainen}
\begin{split}
  \int_{\tilde{I} _j^i } \dist (z, \partial \Omega) ^{1-p } \,{\rm d}s(z) & =
2 \int_0^{\frac{1}{2} (1-2\lambda) \lambda^{j-1}}  \left( \frac{t}{\sqrt{2}}\right)^{1-p} \,{\rm d} t  \\
&=\frac{2^{\frac{3}{2}(p-1)}}{2-p}  (1-2\lambda)^{2-p}\lambda^{(j-1)(2-p)}\\
&=\frac{2^{\frac{3}{2}(p-1)}}{2-p}  |\tilde{I}_j^i|^{2-p}.
\end{split}  
\end{equation}

Since $\mathcal C_\lambda \cap I_j^i$ has a zero $\mathcal{H}^1$-measure, by using \eqref{eq:keskimmainen} for all $k>j$, $i=1, \ldots, 2^k$, we get
\begin{align*}
\int_{I_j^i} \dist(z,\partial \Omega)^{1-p} \,{\rm d}s (z) & =  \sum_{k=j}^\infty \sum_{\tilde{I}_{k+1}^l \subset I_j^i}   \int_{\tilde{I}_{k+1}^l} \dist(z, \partial \Omega)^{1-p}\,{\rm d}s(z) \\
& = \sum_{k=j}^{\infty}   2^{k-j} \cdot 2 \int_0^{\frac{1}{2} (1-2\lambda) \lambda^k}  \left( \frac{t}{\sqrt{2}}\right)^{1-p} \,{\rm d} t \\
& =\frac{2^{ \frac{3(p-1)}{2}  }}{2-p}  (1-2\lambda) ^{2-p} \sum_{k=j}^{\infty} 2^{k-j}\left( \lambda ^{p-2} \right) ^{-k}\\
& = \frac{2^{\frac{3}{2}(p-1)}}{2-p} (1-2\lambda) ^{2-p}  \frac{\lambda^{(2-p)j}}{1-2\lambda^{2-p}}\\
&=\frac{2^{\frac{3}{2}(p-1)}}{2-p} \frac{|\tilde{I}_{j+1}^i|^{2-p}}{1-2 |I_1^i|^{2-p}},
\end{align*}
where the last sum converges by the assumption $p< 2-\log_{\frac{1}{\lambda}} 2$.

Therefore, 
\begin{equation}
\begin{split}
\int_{[\tilde x, \tilde y]} \dist(z, \partial \Omega) ^{1-p } \,{\rm d}s(z)&  \leq \frac{2^{\frac{3}{2}(p-1)}}{2-p} \left( |\tilde{I}_j^i|^{2-p}  + 2 \cdot  \frac{|\tilde{I}_{j+1}^i|^{2-p}}{1-2 |I_1^i|^{2-p}} \right) \\
& \leq \frac{2^{\frac{3}{2}(p-1)}}{2-p}   \frac{(1-2 \lambda)^{2-p}}{\lambda^{2-p}(1-2 \lambda^{2-p})}|\tilde{x} - \tilde{y} | ^{2-p} \\
& \leq \frac{2^{\frac{3}{2}(p-1)}}{2-p}   \frac{(1-2 \lambda)^{2-p}}{\lambda^{2-p}(1-2 \lambda^{2-p})}|x-y | ^{2-p}.
\end{split}\label{eq2}
\end{equation}
Combining \eqref{eq1} and \eqref{eq2} we have
\begin{align*}\int_{\gamma}  \dist(z, \partial \Omega) ^{1-p } \,{\rm d}s(z)  & \leq 3\frac{2^{\frac{3}{2}(p-1)}}{2-p}   \frac{(1-2 \lambda)^{2-p}}{\lambda^{2-p}(1-2 \lambda^{2-p})}|x-y | ^{2-p}\\
& \leq \frac{9}{(2-p)\lambda^{2-p}(1-2 \lambda^{2-p})}|x-y | ^{2-p}.
\end{align*}

(iii): Finally, assume that $x \in \Omega ^c \cap (-1,1)^2$ and $y \in \mathbb{R} ^2 \setminus (-1,1)^2$. Connect $x$ and $w=(1,0)$ with $\gamma_1$ as in (ii) and $w$ and $y$ with $\gamma_2$ as in (i). Let us check that the curve $ \gamma$ defined as $\gamma =\gamma_1 \ast \gamma_2$ fulfils \eqref{eq:lma_curvecondition}.
This follows from the fact that $x$ and $y$ can not be close to each other without being close to $z$  i.e. 
$|x-w| < c_1 |x-y|$
and
$|y-w| <c_2 | x-y|$
with absolute constants $c_1,c_2>0$. Thus
\begin{align*}
\int_{\gamma}  \dist(z, \partial \Omega) ^{1-p } \,{\rm d}s(z) &  = \int_{\gamma_1}  \dist(z, \partial \Omega) ^{1-p } \,{\rm d}s(z) + \int_{\gamma_2}  \dist(z, \partial \Omega) ^{1-p } \,{\rm d}s(z)  \\
&\leq  \frac{9}{(2-p)\lambda^{2-p}(1-2 \lambda^{2-p})}|w-y | ^{2-p} +  \frac{c}{2-p} |x-w|^{2-p}  \\
& \leq \frac{c}{(2-p)\lambda^{2-p}(1-2 \lambda^{2-p})}|x-y | ^{2-p}.
\end{align*}
\end{proof}

Using Lemma \ref{lma:constantestimate} we can now show the existence of constants $M_2>0$ and $C(p)>0$ so that \eqref{eq:sharpnessintro} holds for $C \ge C(p)$.

Fix $p \in (1,2)$, and let $M_2 = \frac{8c}{\log 2}$ where $c$ is the absolute constant from Lemma \ref{lma:constantestimate}. 
In order to make estimates, we use the construction for $\lambda
\in [\frac122^{\frac1{p-2}},2^{\frac1{p-2}})$. By Lemma \ref{lma:constantestimate} we know that the domain $\Omega_\lambda$ satisfies the curve condition with the constant
\begin{equation}\label{eq:constandefinition}
\frac{c}{(2-p)\lambda^{2-p}(1-2 \lambda^{2-p})}.
\end{equation}
Let us define 
\[
C(p) = \frac{c}{(2-p)2^{p-3}(1-2^{p-2})}.
\]
Note that, $C(p)$ equals \eqref{eq:constandefinition} with $\lambda = \frac{1}{2}2^{\frac{1}{p-2}}$.
Now, for $C\geq C(p)$, by the continuity of the constant in \eqref{eq:constandefinition} as a function of $\lambda$ and the fact that it tends to infinity as $\lambda \nearrow 2^{\frac1{p-2}}$, there exists $\lambda_C \in [\frac122^{\frac1{p-2}},2^{\frac1{p-2}})$ such that
\[
C = \frac{c}{(2-p)\lambda_C^{2-p}(1-2 \lambda_C^{2-p})}.
\]
We show that
\begin{equation}\label{eq:sharpness}
\dim{\mathcal C}_{\lambda_C}= -\frac{\log 2}{\log \lambda_C} \ge 2 - p - \frac{M_2}{C}.
\end{equation}
By the assumption $\lambda_C\ge \frac122^{\frac1{p-2}}$, we have
\[
C \le \frac{c2^{3-p}}{(2-p)(1-2 \lambda^{2-p})}
\le \frac{4c}{(2-p)(1-2 \lambda^{2-p})}.
\]
In order to see that \eqref{eq:sharpness} holds, we show that
$$ f_p(\lambda) =2- p - \frac{M_2}{4c}(2-p)(1-2\lambda^{2-p}) + \frac{\log 2}{\log \lambda}$$
is non-positive on the interval $[\frac122^{\frac1{p-2}},2^{\frac1{p-2}})$.
This follows from
\begin{align*}
\min_{\lambda \in [\frac122^{\frac1{p-2}},2^{\frac1{p-2}}]} f_p'(\lambda) & \geq  2\frac{M_2}{4c}(2-p)^2(\frac122^{\frac1{p-2}})^{1-p} -\frac{\log2}{2^{-1} 2^{\frac{1}{p-2}} \log^2(2^{\frac{1}{p-2}})}\\
&=  \frac{M_2}{2c}(2-p)^2(2^{\frac{3-p}{p-2}})^{1-p}
-\frac{(p-2)^2}{2^{\frac{3-p}{p-2}} \log 2}\\
& = \frac{(2-p)^2}{2^{\frac{3-p}{p-2}}}\left(\frac{M_2}{2c}(2^{\frac{3-p}{p-2}})^{2-p}
-\frac{1}{\log 2}\right)\\
& \ge \frac{(2-p)^2}{2^{\frac{3-p}{p-2}}}\left(\frac{M_2}{8c}
-\frac{1}{\log 2}\right)
\ge 0,
\end{align*}
and
\[
f_p(\lambda) \le f_p(2^{\frac1{p-2}}) = 0.
\]
Hence, \eqref{eq:sharpness} holds.

\section*{Acknowledgements}
The author thanks his advisor Tapio Rajala for helpful comments and suggestions. The author thanks  Miguel Garc\'ia-Bravo for his comments, suggestions, and corrections, which improved this paper.

\end{document}